\newtheorem{theorem}{Theorem}[section]
\theoremstyle{definition}
\newtheorem{prop}{Proposition}[section]
\theoremstyle{remark}
\numberwithin{equation}{section}
\newcommand{\neweq}[1]{\begin{equation}\label{#1}}
\def\phi{\varphi}
\def\incep{\left\{\begin{array}{cl} }
 \def\termin{\end{array}\right. }
\def\2af{2^*_\alpha}
\begin{document}

\title[The entropy of an extended map for Abelian group actions]
{\textbf{The entropy of an extended map for Abelian group actions}}

\author{Yuan Lian}
\address{College of Mathematics and Statistics, Taiyuan Normal University, Taiyuan 030619, China}
\email{andrea@tynu.edu.cn}
%\thanks{The research was supported by Fundamental Research Program of Shanxi Province (No.20210302123322).}
%\thanks{$^{\ast}$Corresponding author: andrea@tynu.edu.cn}

%\author{Bin Zhu}
%\address{School of Mathematics and Statistics, Chongqing Technology and Business University, Chongqing 400067, China}
%\email{binzhucqu@163.com}

\keywords{entropy; Abelian group; extention.}

%    \subjclass is required.
%\subjclass[2010]{Primary: 30C65; Secondary: 30L10.}

%\dedicatory{}

%    Abstract is required.
%    \subjclass is required.
\subjclass[2010]{37A35,37B40.}
\begin{abstract}

In this paper, we mainly consider on the entropy of the extended map conditional to the natural
extension of a dynamical system for an Abelian group action and we calculate the entropy is zero.

\end{abstract}

\maketitle

%    Text of article.
%------------------------------------------------------------------------------
\section{Introduction}
In this section, we give some background on measurable dynamical systems used in our discussion and the theoretical basis of this section mainly refers to reference \cite{AL} and \cite{DZ}.

A probability space is called  Lebesgue space (see the pioneering work of Rokhlin) if it is metrically isomorphic to a probability space which is the disjoint union of an at most countable (possibly empty) set $\{x_{1},x_{2},...\}$ of points each of positive measure and the space $([0,s),\mathcal{L},\lambda)$(possibly absent), where $\mathcal{L}$
is the $\sigma-$algebra of Lebesgue measure subsets of the interval $[0,s)$ and $\lambda$ is Lebesgue measure. Here $s=1-\sum p_{n}$, $p_{n}=$measure of the point $x_{n}$.

A Polish probability space $(X, \mathcal{B}_{X},\nu)$ means that $X$ is a separable topological space whose topology is metrizable by a complete metric, $\mathcal{B}_{X}$ is the Borel $\sigma$-algebra, and $\nu$ is a Borel probability measure on $X$. A Polish system $(X, \mathcal{B}_{X},\nu)$ is a measure-preserving maps $G$ on a Polish space $(X, \mathcal{B}_{X},\nu)$. A Lebesgue system $(X, \mathcal{B}_{X},\nu, G)$ is a measure-preserving maps $G$ on a Lebesgue space  $(X, \mathcal{B}_{X},\nu)$.

By a measurable dynamical $G$-system (MDS) $(Y, \mathcal{D}, \nu,G)$ we mean a measure-preserving transformations of $(Y,\mathcal{D}, \nu)$ with $e_{G}$ acting as the identity transformation. A cover of $(Y,\mathcal{D}, \nu)$ is a family $\mathcal{W}\subseteq\mathcal{D}$ satisfying $\cup_{W\in \mathcal{W}}W=Y$; if all elements of a cover $\mathcal{W}$ are disjoint, then $\mathcal{W}$ is called a partition of $(Y,\mathcal{D}, \nu)$. Denote by $C_{Y}$ and $P_{Y}$ the set of all finite covers and finite partitions of $(Y,\mathcal{D}, \nu)$, respectively. Let $\alpha$ be a partition of $(Y,\mathcal{D},\nu)$ and $y \in Y $. Denote by $\alpha(y)$ the atom of $\alpha$ containing $y$. Let $\mathcal{W}_{1},\mathcal{W}_{2}\in C_{Y}$. If
each element of $\mathcal{W}_{1}$ is contained in some element of $\mathcal{W}_{2}$ then we say that $\mathcal{W}_{1}$ is finer than $\mathcal{W}_{2}$ (denote by
$\mathcal{W}_{1}\succeq \mathcal{W}_{2}$ or $\mathcal{W}_{2}\preceq\mathcal{W}_{1}$). The join $\mathcal{W}_{1}\vee \mathcal{W}_{2}$ of $\mathcal{W}_{1}$
and $\mathcal{W}_{2}$ is given by $$\mathcal{W}_{1}\vee \mathcal{W}_{2}=\{W_{1}\cap W_{2} : W_{1} \in \mathcal{W}_{1},W_{2}\in \mathcal{W}_{2}\}$$ The definition extends naturally to a finite collection of covers. Fix $\mathcal{W}_{1}\in C_{Y}$ and denote by $\mathcal{P}(\mathcal{W}_{1})\in P_{Y}$ the finite partition generated by
$\mathcal{W}_{1}$: that is, if we say $\mathcal{W}_{1} = \{W_{1}^{1} ,\cdots,W^{m}_{1}\}$, $m \in \mathbb{N}$ then $$\mathcal{P}(\mathcal{W}_{1})=
\left\{\cap_{i=1}^{m}A_{i}:A_{i}\in \{W_{1}^{i}, (W_{1}^{i})^{c}\},1\leq i\leq m\right\}.$$

A finite collection of partitions which we will use in the sequel. Let
$$\mathcal{P}(\mathcal{W}_{1}) = \{\alpha\in P_{Y} : P(\mathcal{W}_{1})\succeq \alpha\succeq \mathcal{W}_{1}\}$$
Now let $\mathcal{C}$ be a sub-$\sigma-$algebra of $\mathcal{D}$ and $ \mathcal{W}_{1}\in P_{Y}$. We set
$$H_{\nu}(\mathcal{W}_{1}|\mathcal{C})=-\sum_{W_{1}\in \mathcal{W}_{1}}\int_{Y}\nu(\mathcal{W}_{1}|\mathcal{C})(y)\log\nu(\mathcal{W}_{1}|\mathcal{C})(y)d\nu(y)$$
(by convention, we set $0\log0=0$). Here, $\nu(\mathcal{W}_{1}|\mathcal{C})(y)$ denotes the conditional expectation with respect to $\nu$ of the function $1_{W_{1}}$
relative to $\mathcal{C}$.

Let $(Y,\mathcal{D},\nu,G)$ be an MDS, $\mathcal{W}\in C_{X}$ and $\mathcal{C}\subseteq \mathcal{D}$ a sub-$\sigma-$algebra. For each $F\in Fin(G)$, set
$\mathcal{W}_{F}=\bigvee_{g\in F}g^{-1}\mathcal{W}$.
If $\mathcal{C}$ is $G-$invariant, i.e. $g^{-1}\mathcal{C} = \mathcal{C}$ (up to $\nu$ null sets) for each $g\in G$, let
$H_{\nu}(\mathcal{W}_{\cdot}|\mathcal{C}) :Fin(G)\rightarrow\mathbb{R}$,
$F\mapsto H_{\nu}(\mathcal{W}_{F}|\mathcal{C})$ and the measure-theoretic $\nu-$entropy of $\mathcal{W}$ with respect to $\mathcal{C}$ and the measure-theoretic $\nu,+-$entropy
of $\mathcal{W}$ with respect to $\mathcal{C}$ by
$$h_{\nu}(G,\mathcal{W}|\mathcal{C})=\lim_{n\rightarrow\infty}\frac{1}{|F_{n}|}H_{\nu}(\mathcal{W}_{F_{n}}|\mathcal{C})$$
and
$$h_{\nu,+}(G,\mathcal{W}|\mathcal{C})=inf_{\alpha\in P(Y),\alpha\succeq \mathcal{W}}h_{\nu}(G,\alpha|\mathcal{C})$$

\begin{theorem}(\cite[\textit{Theorem 3.2}]{DZ})\label{a}
Let $(Y, \mathcal{D}, \nu, G)$ be an MDS, $\mathcal{W}\in C_{Y}$ and $\mathcal{C} \subseteq \mathcal{D}$ a $G$-invariant sub-$\sigma$-algebra. Assume that $(Y,\mathcal{D}, \nu)$
is a Lebesgue space. Then
$$h_{\nu}(G, \mathcal{W}|\mathcal{C}) = h_{\nu,+}(G, \mathcal{W}|\mathcal{C}).$$
\end{theorem}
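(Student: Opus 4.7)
I would establish the two inequalities $h_\nu(G,\mathcal{W}|\mathcal{C})\le h_{\nu,+}(G,\mathcal{W}|\mathcal{C})$ and $h_{\nu,+}(G,\mathcal{W}|\mathcal{C})\le h_\nu(G,\mathcal{W}|\mathcal{C})$ separately. Since $G$ is abelian hence amenable, fix throughout a F{\o}lner sequence $\{F_n\}_{n\ge 1}$. The forward inequality is by monotonicity: for any $\alpha\in P_Y$ with $\alpha\succeq\mathcal{W}$ and any $F\in\mathrm{Fin}(G)$, the partition $\alpha_F=\bigvee_{g\in F}g^{-1}\alpha$ refines the cover $\mathcal{W}_F$, and by grouping atoms of $\alpha_F$ according to which element of $\mathcal{W}_F$ they sit in one exhibits a coarsening $\gamma\in\mathcal{P}(\mathcal{W}_F)$ of $\alpha_F$ satisfying $H_\nu(\mathcal{W}_F|\mathcal{C})\le H_\nu(\gamma|\mathcal{C})\le H_\nu(\alpha_F|\mathcal{C})$. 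Dividing by $|F_n|$, passing to the limit, and taking the infimum over $\alpha$ gives $h_\nu(G,\mathcal{W}|\mathcal{C})\le h_{\nu,+}(G,\mathcal{W}|\mathcal{C})$.

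The reverse inequality is the core of the theorem, and I would approach it by adapting Romagnoli's argument (originally for $\mathbb{Z}$-actions) to the $G$-setting. Fix $\varepsilon>0$. For each $n$ select $\beta_n\in\mathcal{P}(\mathcal{W}_{F_n})$ realising $H_\nu(\beta_n|\mathcal{C})=H_\nu(\mathcal{W}_{F_n}|\mathcal{C})$. Each atom of $\beta_n$ is, up to $\nu$-null sets, of the form $\bigcap_{g\in F_n}g^{-1}W^{(g)}$ for some labelling $\omega:F_n\to\mathcal{W}$. Using the Lebesgue-space structure of $(Y,\mathcal{D},\nu)$, one obtains a measurable selection $y\mapsto\omega_y$, and defines a partition $\alpha_n\succeq\mathcal{W}$ by sending each $y$ to the label of its $\beta_n$-atom evaluated at the identity $e_G$, namely $W^{(e_G)}_y$.

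The crucial estimate is the comparison $H_\nu((\alpha_n)_{F_n}|\mathcal{C})\le H_\nu(\beta_n|\mathcal{C})+o(|F_n|)$. The idea is that for $g$ lying deep in the interior of $F_n$, which by the F{\o}lner property is the bulk of $F_n$, the shifted partition $g^{-1}\alpha_n$ already records the label $W^{(g)}$; hence $(\alpha_n)_{F_n}$ essentially reproduces $\beta_n$ modulo a boundary contribution bounded by $|\partial F_n|\log|\mathcal{W}|$. Dividing by $|F_n|$ and passing to the limit yields $h_\nu(G,\alpha_n|\mathcal{C})\le h_\nu(G,\mathcal{W}|\mathcal{C})+\varepsilon$ for all sufficiently large $n$, whence $h_{\nu,+}(G,\mathcal{W}|\mathcal{C})\le h_\nu(G,\mathcal{W}|\mathcal{C})+\varepsilon$, and letting $\varepsilon\to 0$ closes the argument.

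The main obstacle is the combinatorial estimate $H_\nu((\alpha_n)_{F_n}|\mathcal{C})\le H_\nu(\beta_n|\mathcal{C})+o(|F_n|)$: one must handle the group-translation structure carefully so that ``projection to $e_G$'' of the labelling does not destroy essential entropy information, and one must show the boundary error is truly $o(|F_n|)$ in the presence of the conditioning $\mathcal{C}$. The Lebesgue-space hypothesis is indispensable both for the measurable selection producing $\alpha_n$ and for the well-posedness of the conditional expectations $\nu(\cdot|\mathcal{C})$ entering $H_\nu(\cdot|\mathcal{C})$, while amenability of $G$ (automatic since $G$ is abelian) is precisely what makes the F{\o}lner averaging legitimate.
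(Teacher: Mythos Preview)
The paper does not prove this statement at all: Theorem~\ref{a} is quoted verbatim from \cite[Theorem~3.2]{DZ} and used as a black box, so there is no proof in the paper against which to compare your proposal.

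That said, your sketch has a genuine gap in the reverse inequality. You construct for each $n$ a partition $\alpha_n\succeq\mathcal{W}$ and aim for an estimate of the form
\[
H_\nu\big((\alpha_n)_{F_n}\,\big|\,\mathcal{C}\big)\le H_\nu(\beta_n\,|\,\mathcal{C})+o(|F_n|),
\]
and then write ``Dividing by $|F_n|$ and passing to the limit yields $h_\nu(G,\alpha_n|\mathcal{C})\le h_\nu(G,\mathcal{W}|\mathcal{C})+\varepsilon$.'' This inference is invalid: the quantity $h_\nu(G,\alpha_n|\mathcal{C})$ is the limit $\lim_{m\to\infty}\frac{1}{|F_m|}H_\nu((\alpha_n)_{F_m}|\mathcal{C})$ taken over a \emph{separate} index $m$, with $\alpha_n$ held fixed. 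Your estimate controls only the single diagonal term $m=n$, and since $\alpha_n$ varies with $n$ you cannot ``pass to the limit'' to obtain a bound on $h_\nu(G,\alpha_n|\mathcal{C})$ for any individual $n$, nor do you produce a single partition $\alpha\succeq\mathcal{W}$ witnessing the infimum in $h_{\nu,+}$.

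The actual proof in \cite{DZ} closes this gap by a more delicate argument: rather than a diagonal estimate, one uses the Lebesgue-space hypothesis to disintegrate $\nu$ over $\mathcal{C}$ and works fibrewise, combined with a careful limiting construction (in the spirit of Romagnoli and Glasner--Weiss) that genuinely produces, for each $\varepsilon>0$, a \emph{fixed} partition $\alpha\succeq\mathcal{W}$ with $h_\nu(G,\alpha|\mathcal{C})\le h_\nu(G,\mathcal{W}|\mathcal{C})+\varepsilon$. Your ``projection to $e_G$'' heuristic is morally in the right direction, but as written it does not deliver the needed conclusion.
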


Then the measure-theoretic $\nu-$entropy of $(Y,\mathcal{D},\nu,G)$ with respect to $\mathcal{C}$ is
$$h_{\nu}(G,Y|\mathcal{C})=sup_{\alpha\in P(Y)}h_{\nu}(G,\alpha|\mathcal{C}).$$

In addition, we also need the following theoretial foundation. This part of the content can also be found in the reference \cite[P19]{DZ}. In this article, we use TDS $(Y,G)$
to represent a group $G$ acts over a compact metric space $Y$ as a group of homeomorphisms of the space. We write $P(Y)$ for the set of probability measures on $Y$.

Let $\pi: (Y_{1},G)\rightarrow(Y_{2},G)$ be a factor map between TDSs and $\mathcal{W}\in C_{Y_{1}}, \nu_{1}\in \mathcal{P}(Y_{1},G)$, where denote by $\mathcal{P}(Y_{1},G)$
the set of all $G-$invariant elements of $P(Y_{1})$, is a nonempty compact metric space. so the measure-theoretic $\nu_{1}$-entropy of $\mathcal{W}$ relative to $\pi$ by
$$h_{\nu_{1}}(G,\mathcal{W}|\pi) = h_{\nu_{1}}(G,\mathcal{W}|\pi^{-1}\mathcal{B}_{Y_{2}})= h_{\nu_{1,+}}(G,\mathcal{W}|\pi^{-1}\mathcal{B}_{Y_{2} })$$
where $\mathcal{B}_{Y_{1}},\mathcal{B}_{Y_{2}}$ is the Borel $\sigma-$algebra of $Y_{1},Y_{2}$,respectively. The second equality follows from theorem \ref{a}, since
$(Y_{1}, \mathcal{B}_{Y_{1}}, \nu_{1})$ is a Lebesgue space. Finally, the measure-theoretic $\nu_{1}$-entropy of $(Y_{1},G)$ relative to $\pi$ is
$h_{\nu_{1}}(G, Y_{1}\mid\pi) = h_{\nu_{1}}(G,Y_{1}|\pi^{-1}\mathcal{B}_{Y_{2}})$.

\begin{prop}(\cite[\textit{Lemma 9.5}]{KL17})\label{b}
Let $\mathcal{P}$ be a finite partition of $X$ and let $\varepsilon> 0$. Then there exists a $\delta> 0$ such that, for every finite partition $\mathcal{Q}$ of $X$ with
the property that for all $A\in \mathcal{P} $ there is a set $B$ in the $\sigma-$algebra generated by $\mathcal{Q}$ satisfying $\mu(A\Delta B) < \delta$, one has
$H( \mathcal{P}\mid\mathcal{Q})<\varepsilon$.
\end{prop}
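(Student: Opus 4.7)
The plan is to reduce the bound on $H(\mathcal{P}\mid\mathcal{Q})$ to an estimate of $H(\mathcal{P}\mid\mathcal{P}')$ for a suitably chosen partition $\mathcal{P}'$ whose atoms all lie in the $\sigma$-algebra generated by $\mathcal{Q}$. Once such a $\mathcal{P}'$ is in hand, the monotonicity of conditional entropy under enlargement of the conditioning $\sigma$-algebra gives $H(\mathcal{P}\mid\mathcal{Q})\leq H(\mathcal{P}\mid\mathcal{P}')$, so the problem reduces to controlling the latter in terms of how close $\mathcal{P}'$ is to $\mathcal{P}$ in the symmetric-difference pseudo-metric.

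Write $\mathcal{P}=\{P_{1},\ldots,P_{k}\}$ and let $B_{1},\ldots,B_{k}\in\sigma(\mathcal{Q})$ be the sets supplied by the hypothesis, so that $\mu(P_{i}\Delta B_{i})<\delta$ for each $i$. Since $\mathcal{Q}$ is finite, each $B_{i}$ is a union of atoms of $\mathcal{Q}$. To manufacture a \emph{partition} from them I would set $B'_{1}=B_{1}$, inductively define $B'_{i}=B_{i}\setminus\bigcup_{j<i}B_{j}$, and absorb the leftover $X\setminus\bigcup_{i<k}B'_{i}$ into the last class to obtain $\mathcal{P}'=\{A'_{1},\ldots,A'_{k}\}\subseteq\sigma(\mathcal{Q})$. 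Using the disjointness of the $P_{i}$ together with the inclusion $B_{j}\cap P_{i}\subseteq B_{j}\Delta P_{j}$ for $j\neq i$, a routine computation shows that $\mu(P_{i}\Delta A'_{i})\leq c(k)\delta$ for a constant $c(k)$ depending only on $k$.

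At this stage I would invoke the continuity of Shannon entropy. When $\mathcal{P}'$ is close to $\mathcal{P}$ in the sense just established, the conditional probabilities $\mu(P_{i}\cap A'_{j})/\mu(A'_{j})$ concentrate near $1$ for $i=j$ and near $0$ for $i\neq j$, so the row entropies defining $H(\mathcal{P}\mid\mathcal{P}')$ are uniformly small. Choosing $\delta$ small enough as a function of $k$ and $\varepsilon$ therefore forces $H(\mathcal{P}\mid\mathcal{P}')<\varepsilon$, and combining with the monotonicity step completes the argument.

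The main obstacle I anticipate is the bookkeeping in the disjointification step: the given $B_{i}$ are neither pairwise disjoint nor do they cover $X$, and one must verify that turning them into a partition inflates the total symmetric-difference error only by a constant depending on $k$ (so that the same $\delta$ still works uniformly over all $\mathcal{Q}$ of this form). The remaining ingredients, namely monotonicity $H(\mathcal{P}\mid\mathcal{C}_{1})\leq H(\mathcal{P}\mid\mathcal{C}_{2})$ when $\mathcal{C}_{2}\subseteq\mathcal{C}_{1}$ and continuity of $x\mapsto -x\log x$ near $0$ and $1$, are standard and require no new ideas.
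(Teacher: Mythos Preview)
The paper does not supply its own proof of this proposition; it merely quotes the result from Kerr--Li \cite[Lemma~9.5]{KL17} and uses it as a black box in the proof of the main theorem. Consequently there is nothing in the paper to compare your argument against line by line.

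That said, your sketch is correct and is precisely the standard proof of this fact (and in particular is the argument in Kerr--Li). The two ingredients you isolate---disjointifying the approximants $B_{i}\in\sigma(\mathcal{Q})$ into a genuine partition $\mathcal{P}'$ with $\mu(P_{i}\Delta A'_{i})\le c(k)\delta$, and then invoking monotonicity $H(\mathcal{P}\mid\mathcal{Q})\le H(\mathcal{P}\mid\mathcal{P}')$ together with continuity of conditional entropy in the Rokhlin pseudometric---are exactly what is needed, and the bookkeeping you flag as the only delicate point is routine (indeed your estimate $\mu(P_{i}\Delta A'_{i})\le\sum_{j\le i}\mu(P_{j}\Delta B_{j})$ for $i<k$, and the complementation bound for $i=k$, give $c(k)=k^{2}$). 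It is worth noting that the paper performs an almost identical disjointification in the proof of its main Theorem~3.1 when it passes from the approximants $A'_{i}\in\mathcal{D}_{Y}$ to the partition $\gamma=\{C_{1},\dots,C_{k}\}$, so your construction is entirely in the spirit of the surrounding text.
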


\section{Extension}

In reference \cite{HL}, authors proved the entropy of extended map conditional to the natural extension is zero under the general transformation (see \cite{WP}), and we solve the corresponding result under the action of an Abelian group, since Abelian group is an amenable group and some properties of the amenable group are used in the proof of the main result, some properties of amenable group need to be known, see\cite{HLZ,HLZ2,LHL,ZHL}.
This section is based on the influential articles \cite{AL,KL17,RV}.
By an action of the group $G$ on a set $X$ we mean a map $\alpha:G \times X\longrightarrow X$ such that, writing the first argument as a subscript,
$\alpha_{s}(\alpha_{t}(x)) = \alpha_{st}(x)$ and $\alpha_{e}(x) = x$ for all $x\in X$ and $s, t\in G$. Most of the time we will write the action as $G\curvearrowright X$.

Given two actions $G\curvearrowright X$ and $G\curvearrowright Y$, a map $\varphi:X\rightarrow Y$ is $G-$ equivariant, or simply equivariant, if $\varphi(sx)=s\varphi(x)$ for all $x\in X$ and $s\in G$.

Let $G\curvearrowright X$ and $G\curvearrowright Y$ be continuous actions on compact Hausdorff spaces. We say that the second is a factor of the first, and that the first is an extension of the second, if there is an equivariant continuous surjection $\pi: X \longrightarrow Y $. Such a $\pi$ is called a $G$-factor map or $G$-extension.
A set $A\subseteq X$ is $G-$invariant if $GA=A$, which is equivalent to $GA\subseteq A$.

For a given Lebesgue space $(X,\mathcal{B},\mu)$, $G$ is an Abelian group and $G\curvearrowright X$ is a group action. Let
$$Y=\left\{y\in X^{G}:y(tg)=\alpha_{t^{-1}}y(g) \quad \text{for all}\quad t,g\in G\right\}$$ Then

\begin{enumerate}
\item $Y$ is a $G-$invariant subset of $X^{G}$.

\item $\varphi:Y\rightarrow X$ by $\varphi(y)=y(e)$ is a bijection of measurable space.

\item Taking the $\sigma-$algebra $\mathcal{D}$ on $Y$ which is the restriction of the product $\sigma-$algebra on $X^{G}$. $\mathcal{D}=\mathcal{B}_{X^{G}}\mid_{Y}$,
define the probability measure $\nu$ on $Y$ by  $\nu(A)=\mu(\varphi(A))$ for $A\in \mathcal{D}$. Then $\nu$ is invariant for the restriction of the shift action to $Y$.
\end{enumerate}

\begin{proof}

(1) Let $y\in Y$, $s\in G$.  Consider the left shift action $G\curvearrowright X^{G}$, which for notational legibility in later formulas we will write as
$(sy)(t) = y(s^{-1}t)$ in this section, the property of the elements of $Y$ and the action $G\curvearrowright X$,
$$(sy)(tg)=y(s^{-1}tg)=\alpha_{g^{-1}t^{-1}}(y(s))=\alpha_{g^{-1}}(\alpha_{t^{-1}}y(s^{-1}))=\alpha_{g^{-1}}(y(s^{-1}t))=\alpha_{g^{-1}}((sy)(t))$$ for all $t,g\in G$.
We conclude by the definition of $Y$ and $G$ is Abelian group that $Y$ is a $G-$invariant subset of $X^{G}$.

(2) If $y_{1}\neq y_{2}\in Y$, there is $g\in G$ such that $y_{1}(g)\neq y_{2}(g)$. If $g=e$, it is obvious. If $g\neq e$, then
$y_{1}(g)=y_{1}(e\cdot g)=\alpha_{g^{-1}}(y_{1}(e))$ and $y_{2}(g)=y_{2}(e\cdot g)=\alpha_{g^{-1}}(y_{2}(e))$, and
$$y_{1}(e)=\alpha_{g}\circ \alpha_{g^{-1}}y_{1}(e)=\alpha_{g}y_{1}(g)\neq\alpha_{g}y_{2}(g)=\alpha_{g}\circ \alpha_{g^{-1}}y_{2}(e)=y_{2}(e)$$ i.e. $y_{1}(e)\neq y_{2}(e)$.
Hence  $\varphi$ is injective.

Let $x\in X$, there is $y=(y_{g})_{g\in G}\in Y$ such that $\varphi(y)=x=y(e)$. Let $g\in G$, and define $y(g)=\alpha_{g^{-1}}(y(e))=\alpha_{g^{-1}}(x)$, and so $\varphi$
is surjective.

(3)Let $s\in G$, $G \curvearrowright (X,\mu)$ p.m.p action and $\nu=\varphi(\mu)$, then
$$\nu(s^{-1}A)=\mu(\varphi(s^{-1}A))=\mu(s^{-1}\varphi(A))=\mu(\varphi(A))=\nu(A)   \quad \text{for}\quad   A\in\mathcal{D}.$$
\end{proof}

If $G$ is an Abelian group. Let $g\in G$, $\Pi_{g,X}:Y\rightarrow X$ defined by $\Pi_{g,X}(y)=y(g)$ for $y\in Y$. Then $\prod_{g,X}$ is $G-$factor map for $g\in G$. Indeed, for $g\in G$,
$$\prod_{g,X}\circ \alpha_{s}(y)=(\alpha_{s}(y))(g)=y( s^{-1}(g))=\alpha_{s}(y(g))=\alpha_{s}\circ\prod_{g,X}(y), $$
for all $y\in Y,s\in G$. Hence $\Pi_{g,X}$ is equivariant.

For any $x\in X$, there is $y\in Y$ with $y(s)=s^{-1}gx$ for all $s\in G$ such that $\Pi_{g,X}(y)=y(g)=x$ and so $\Pi_{g,X}$ is surjective. For any open subset $B$ of $X$, $$\Pi_{g,X}^{-1}(B)=\{y\in Y:y(g)\in B\}=Y\cap\{B\times\prod_{s\in G\setminus\{g\}}X\}$$
is an open subset of $Y$ i.e. $\Pi_{g,X}$ is continuous. For a given Lebesgue system $(X,\mathcal{B},\mu,G)$. Let
$\overline{\mathcal{B}}_{g}=\Pi_{g,X}^{-1}(\mathcal{B})$. For $g\in G$. And we get $\Pi_{g,X}^{-1}(\mathcal{B})\subseteq \Pi^{-1}_{e,X}(\mathcal{B})$. This since
$$\Pi_{g,X}^{-1}(B)=\{y\in Y:y(g)\in B\}=\{y\in Y:y(e)=\alpha_{g}(y(g))\in gB\}=\Pi^{-1}_{e,X}(gB)\in \Pi^{-1}_{e,X}(\mathcal{B})$$ for all $B\in \mathcal{B}$.

Set ${\mathcal{D}}_{Y}=\bigcup_{g\in G}\overline{\mathcal{B}}_{g}$, ${\mathcal{D}}_{Y}$ is an algebra of subsets of $Y$. Indeed,

\begin{enumerate}
\item $\emptyset\in {\mathcal{D}}_{Y}$;

\item if $A,B\in {\mathcal{D}}_{Y}$, i.e. there exist $g_{1},g_{2}\in G$ and $B_{1},B_{2}\in \mathcal{B}$ such that $A=\Pi_{g_{1},X}^{-1}(B_{1})$ and
$B=\Pi_{g_{2},X}^{-1}(B_{2})$,then $A\bigcap B=\Pi_{g_{1},X}^{-1}(B_{1})\bigcap\Pi_{g_{2},X}^{-1}(B_{2})=\Pi_{e,X}^{-1}(g_{1}B_{1}\bigcap g_{2}B_{2})\in\mathcal{D}_{Y}$;

\item Let $A\in {\mathcal{D}}_{Y}$, there is $g\in G,B\in\mathcal{B}$ satisfy $A=\Pi_{g,X}^{-1}(B)$ and $Y/A = Y /\Pi_{g,X}^{-1}(B)=\Pi_{g,X}^{-1}(X/B)\in\mathcal{D}_{Y}$.
\end{enumerate}

Define measure $\overline{\mu}$ on ${\mathcal{D}}_{Y}$satisfies $\overline{\mu}(\Pi_{g,X}^{-1}(A))=\mu(A)$ for $A\in \mathcal{B},g\in G$. $\overline{\mathcal{B}}$ is the completion of the $\sigma-$algebra
generated by ${\mathcal{D}}_{Y}$ with respect to $\overline{\mu}$. The self-map $G$ defined on $Y$ by the restriction of the shift action $G\curvearrowright X^{G}$ to $Y$.
The coinduced action is the p.m.p action $G\curvearrowright(Y,\overline{\mu})$.

Let $\Pi_{X}=\Pi_{e,X}$. Then
$\Pi_{X}:(Y,\overline{\mathcal{B}},\overline{\mu},G)\rightarrow(X,\mathcal{B},\mu,G)$ is a factor map and $(Y,\overline{\mathcal{B}},\overline{\mu},G)$ is a natural extension
of $(X,\mathcal{B},\mu,G)$.

\section{Main result}

\begin{theorem}
Let
$\Pi_{X}:(Y,\overline{\mathcal{B}},\overline{\mu},G)\rightarrow(X,\mathcal{B},\mu,G)$ is a natural extension of $(X,\mathcal{B},\mu,G)$. Then $$h_{\overline{\mu}}(G,Y\mid\Pi_{X})=0.$$
\end{theorem}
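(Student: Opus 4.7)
The plan is to show that the $\sigma$-algebra $\overline{\mathcal{B}}$ generated by the algebra $\mathcal{D}_Y$ actually collapses, up to $\overline{\mu}$-completion, onto $\Pi_X^{-1}(\mathcal{B})$, so that every measurable partition of $Y$ is (up to null sets) already measurable with respect to the factor $\sigma$-algebra. Once this collapse is established, the conditional entropy of any partition relative to $\Pi_X^{-1}(\mathcal{B})$ is zero, and therefore so is the entropy.

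First I would verify the key identity $\Pi_{g,X}^{-1}(\mathcal{B}) = \Pi_X^{-1}(\mathcal{B})$ for every $g \in G$. For $y \in Y$ the defining relation gives $y(g) = y(e \cdot g) = \alpha_{g^{-1}}(y(e))$, so for any $B \in \mathcal{B}$ one has $\Pi_{g,X}^{-1}(B) = \{y \in Y : \alpha_{g^{-1}}(y(e)) \in B\} = \Pi_X^{-1}(\alpha_g B)$, and since $\alpha_g$ is a measurable bijection this shows $\Pi_{g,X}^{-1}(\mathcal{B}) = \Pi_X^{-1}(\mathcal{B})$. Consequently $\mathcal{D}_Y = \bigcup_{g \in G} \overline{\mathcal{B}}_g = \Pi_X^{-1}(\mathcal{B})$, which is already a $\sigma$-algebra, and $\overline{\mathcal{B}}$ is just its $\overline{\mu}$-completion. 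A parallel computation shows that $\Pi_X^{-1}(\mathcal{B})$ is $G$-invariant, a fact which I will need later.

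Next, I would fix an arbitrary finite partition $\alpha \in P(Y)$ and an $\varepsilon > 0$, and invoke Proposition \ref{b}: it produces a threshold $\delta > 0$ such that any partition $\mathcal{Q}$ approximating $\alpha$ in the sense of the statement satisfies $H_{\overline{\mu}}(\alpha \mid \mathcal{Q}) < \varepsilon$. Since each atom of $\alpha$ lies in $\overline{\mathcal{B}}$ and hence, by the preceding paragraph, differs by an $\overline{\mu}$-null set from a set in $\Pi_X^{-1}(\mathcal{B})$, such a $\mathcal{Q} \subseteq \Pi_X^{-1}(\mathcal{B})$ trivially exists; the monotonicity of conditional entropy then yields $H_{\overline{\mu}}(\alpha \mid \Pi_X^{-1}\mathcal{B}) \leq H_{\overline{\mu}}(\alpha \mid \mathcal{Q}) < \varepsilon$. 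Letting $\varepsilon \to 0$ gives $H_{\overline{\mu}}(\alpha \mid \Pi_X^{-1}\mathcal{B}) = 0$.

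Finally, for any $F \in \mathrm{Fin}(G)$, subadditivity of conditional entropy together with the $G$-invariance of $\overline{\mu}$ and of $\Pi_X^{-1}(\mathcal{B})$ gives
\[
H_{\overline{\mu}}(\alpha_F \mid \Pi_X^{-1}\mathcal{B}) \leq \sum_{g \in F} H_{\overline{\mu}}(g^{-1}\alpha \mid g^{-1}\Pi_X^{-1}\mathcal{B}) = |F|\, H_{\overline{\mu}}(\alpha \mid \Pi_X^{-1}\mathcal{B}) = 0.
\]
Dividing by $|F_n|$ along a F\o lner sequence and letting $n \to \infty$ shows $h_{\overline{\mu}}(G, \alpha \mid \Pi_X) = 0$, and taking the supremum over $\alpha \in P(Y)$ delivers the conclusion. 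The main conceptual hurdle is really just the first step: recognising that, although $Y$ is formally built as a subset of $X^G$, the constraint $y(tg) = \alpha_{t^{-1}}y(g)$ (which uses that $G$ is Abelian in a crucial way, since the coordinates $y(g)$ are all forced to be orbit translates of $y(e)$) makes every coordinate projection factor through $\Pi_X$. Once this is seen, the entropy computation is essentially automatic via Proposition \ref{b}.
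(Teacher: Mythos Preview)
Your proof is correct and follows the same essential strategy as the paper: both rely on the observation (established in Section~2 of the paper) that every coordinate projection $\Pi_{g,X}$ factors through $\Pi_X$, so that $\mathcal{D}_Y=\Pi_X^{-1}(\mathcal{B})$, and then conclude via subadditivity of conditional entropy along a F{\o}lner sequence. Your presentation is slightly more streamlined in that you draw the immediate consequence that $\overline{\mathcal{B}}$ is exactly the $\overline{\mu}$-completion of $\Pi_X^{-1}(\mathcal{B})$, yielding $H_{\overline{\mu}}(\alpha\mid\Pi_X^{-1}\mathcal{B})=0$ outright, whereas the paper carries an explicit $\frac{1}{m}$-approximation by a partition $\gamma\subseteq\mathcal{D}_Y$ (with the disjointification $C_i=A_i'\setminus\bigcup_{j<i}A_j'$) and only obtains $H_{\overline{\mu}}(\alpha\mid\gamma)<\frac{1}{m}$ before passing to the limit; the underlying idea and the use of Proposition~\ref{b} are identical.
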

\begin{proof}
For $g\in G$. Let $\Pi_{g,X}:Y\rightarrow X$ by $\Pi_{g,X}(y)=y(g)$ for $y\in Y$.  Since $\overline{\mathcal{B}}$ is the completion of the $\sigma-$algebra generated by
${\mathcal{D}}_{Y}$ for any $A\in \overline{\mathcal{B}}$ and $\epsilon>0$, there exists $A_{\epsilon}\in{\mathcal{D}}_{Y}$ such that
$\overline{\mu}(A\Delta A_{\epsilon})<\epsilon$ where $A\Delta A_{\epsilon}=(A\backslash A_{\epsilon})\cup (A_{\epsilon}\backslash A)$ ( see \cite[\textit{Theorem 0.7}]{WP}).

Let $\alpha=\{A_{1},A_{2},\cdots,A_{k}\}\in P_{Y}$ with $k\geq 2$. for $m\in \mathbb{N}$, there exists $\delta=\delta(k,m)>0$ such that for every finite partition
$\beta=\{B_{1},B_{2},\cdots,B_{k}\}\in P_{Y}$ with the property that for all $A\in\alpha$ there is a set $B$ in the $\sigma-$algebra generated by $\beta$ satisfying
$\overline{\mu}(A\Delta B)<\delta$, one has $H_{\overline{\mu}}(\alpha\mid\beta)<\frac{1}{m}$ (see Proposition \ref{b}).

For $i=1,2,\cdots,k-1$, we take $A_{i}'\in{\mathcal{D}}_{Y}$ with $\overline{\mu}(A_{i}\Delta A_{i}')<\frac{\delta}{k^{3}}$. Let $A_{k}'=Y\setminus \cup^{k-1}_{j=1}A_{j}'$.
Then $A_{k}'\in{\mathcal{D}}_{Y}$ and
$$\overline{\mu}(A_{k}\Delta A_{k}')\leq\mu(\cup^{k-1}_{j=1}A_{j}\Delta A_{j}')\leq\sum^{k-1}_{j=1}\mu(A_{j}\Delta A_{j}')<\frac{\delta}{k^{2}}$$
where the first inequality comes from the fact that
$$A_{k}\Delta A_{k}'=(X\setminus \bigcup^{k-1}_{j=1}A_{j})\Delta(X\setminus \bigcup^{k-1}_{j=1}A_{j}')\subseteq \bigcup^{k-1}_{j=1}(A_{j}\Delta A_{j}').$$

Then let $C_{1}=A_{1}'$ and
$$C_{2}=A_{2}'\setminus A_{1}'=A_{2}'\setminus C_{1}\in{\mathcal{D}}_{Y},$$
$$C_{3}=A_{3}'\setminus (A_{1}'\cup A_{2}')=A_{3}'\setminus (C_{1}\cup C_{2})\in{\mathcal{D}}_{Y},$$
$$......$$
$$C_{k}=A_{k}'\setminus \bigcup_{j=1}^{k-1}A_{j}'=A_{k}'\setminus \bigcup_{j=1}^{k-1}C_{j}\in {\mathcal{D}}_{Y}$$
Clearly, $C_{i}\in{\mathcal{D}}_{Y}$ for each $i\in\{1,2,\cdots,k\}$ and $\gamma:=\{C_{1},C_{2},\cdots,C_{k}\}\in \mathcal{P}_{Y}$.

For each $i\in\{1,2,\cdots,k\}$ there exist $g\in G$ and $D_{i}\in \mathcal{B}$ such that $\Pi^{-1}_{X}(D_{i})=C_{i}$. Let $\tau=\{D_{1},D_{2},\cdots,D_{k}\}$. Then
$\tau\in\mathcal{P}_{X}$ and $\gamma=\Pi^{-1}_{X}(\tau)$.

For $i\in\{1,2,\cdots,k\}$, we have
\begin{equation*}
\begin{split}
  A_{i}\Delta C_{i}&=(A_{i}\setminus C_{i})\cup(C_{i}\setminus A_{i})\\
  &\subseteq\left( A_{i}\setminus (A_{i}'\setminus \bigcup_{j=1}^{i-1}A_{j}')\right)\bigcup(A_{i}'\setminus A_{i})\\
  &=(A_{i}\setminus A_{i}')\bigcup\left(A_{i}\cap\bigcup_{j=1}^{i-1}A_{j}'\right)\bigcup(A_{i}'\setminus A_{i})\\
  &=(A_{i}\Delta A_{i}')\bigcup\left(\bigcup_{j=1}^{i-1}A_{j}'\cap  A_{i}\right)\\
  &\subseteq(A_{i}\Delta A_{i}')\bigcup\left\{\bigcup_{j=1}^{i-1}\left[(A_{j}\cap  A_{i})\bigcup(A_{i}\cap(A_{j}'\setminus A_{j}))\right]\right\}\\
  &=(A_{i}\Delta A_{i}')\cup\left(\bigcup_{j=1}^{i-1}(A_{i}\cap(A_{j}'\setminus A_{j}))\right)\\
  &\subseteq \bigcup_{j=1}^{i}(A_{j}'\Delta A_{j}).
\end{split}
\end{equation*}

Thus
$$\overline{\mu}( A_{i}\Delta C_{i})<\frac{\delta}{k}<\delta.$$
By the choice of $\delta$, we know $H_{\overline{\mu}}(\alpha\mid\gamma)<\frac{1}{m}$.

Now, let $\{F_{n}\}$ be F{\o}lner sequence, for $n\in \mathbb{N}$, we have

\begin{equation*}
\begin{split}
  H_{\overline{\mu}}(\bigvee_{s\in F_{n}}s^{-1}\alpha\mid \Pi^{-1}_{X}(\mathcal{B}))&\leq H_{\overline{\mu}}(\bigvee_{s\in F_{n}}s^{-1}\alpha\mid\Pi^{-1}_{X}(\bigvee_{s\in F_{n}}s^{-1}\tau))\\
  &=H_{\overline{\mu}}(\bigvee_{s\in F_{n}}s^{-1}\alpha\mid\bigvee_{s\in F_{n}}s^{-1}(\Pi^{-1}_{X}\tau))\\
  &=H_{\overline{\mu}}(\bigvee_{s\in F_{n}}s^{-1}\alpha\mid\bigvee_{s\in F_{n}}s^{-1}\gamma)\\
  &\leq\sum_{s\in F_{n}}H_{\overline{\mu}}(s^{-1}\alpha\mid\bigvee_{s\in F_{n}}s^{-1}\gamma)\\
  &\leq\mid F_{n}\mid H_{\overline{\mu}}(s^{-1}\alpha\mid s^{-1}\gamma)\\
  &=\mid F_{n}\mid H_{\overline{\mu}}(\alpha\mid \gamma).
\end{split}
\end{equation*}

Using the above inequality, we have

\begin{equation*}
\begin{split}
  h_{\overline{\mu}}(G,\alpha\mid\Pi^{-1}_{X}(\mathcal{B}))&=\lim_{n\rightarrow\infty}\frac{1}{\mid F_{n}\mid}H_{\overline{\mu}}(\bigvee_{s\in F_{n}}s^{-1}\alpha\mid \Pi^{-1}_{X}(\mathcal{B}))\\
  &\leq\lim_{n\rightarrow\infty}\frac{1}{\mid F_{n}\mid}\cdot\mid F_{n}\mid H_{\overline{\mu}}(\alpha\mid \gamma)\\
  &<\frac{1}{m}.
\end{split}
\end{equation*}
Since $m$ is arbitrary, $h_{\overline{\mu}}(G,\alpha\mid\Pi^{-1}_{X}(\mathcal{B}))=0$. This implies $h_{\overline{\mu}}(G\mid\Pi_{X})=0$ this since $\alpha$ ia arbitrary. The proof is complete.

\end{proof}

%\bigskip \noindent{\bf Acknowledgement}.

%------------------------------------------------------------------------------

%    Bibliographies can be prepared with BibTeX using amsplain,
%    amsalpha, or (for "historical" overviews) natbib style.
\bibliographystyle{amsplain}

\begin{thebibliography}{10}

\bibitem {AL} L.Arnold,
\textit{Random dynamical systems.Springer Monographs in Mathematics.}
Springer,Berlin,1998.

\bibitem {DZ} A.Dooley, G.Zhang,
\textit{Local entropy theory of a random dynamical system.}
American Mathematical Society, 2015.

\bibitem {HL} W Huang, K. Lu,
\textit{Entropy, Chaos,and Weak Horseshoe for Infinite-Dimensional Random Dynamical Systems.}
Comm. Pure Appl. Math. 2017,70:1987-2036.


\bibitem {HLZ} X. Huang, J. Liu, C.Zhu,
\textit{The Bowen topological entropy of subsets for amenable group actions.}
J Math Anal Appl, 2019, 472(2): 1678-1715.

\bibitem {HLZ2} X.Huang, Y.Lian, C.Zhu,
\textit{A Billingsley-type theorem for the pressure of an action of an amenable group.}
Discrete Contin. Dyn. Syst., 2019,39(2):959-993.

\bibitem {KL17}  D. Kerr,  H. Li,
\textit{Ergodic Theory : Independence and Dichotomies.} Springer,
2016.

\bibitem {LHL} Y.Lian, X.Huang and Z.Li,
\textit{The proximal relation, regionally proximal relation and Banach proximal relation for amenable group actions.}
Acta Mathematica Scientia, 2021,41B(3):729-752.

\bibitem {RV}V.A.Rohlin,
\textit{Lectures on the entropy theory of transformation with invariant measure.}
 UspehiMat.Nauk, 1967,22(5):3-56.

\bibitem {WP} P.Walters,
\textit{An introduction to ergodic theory.} Graduate Texts in
Mathematics, 79. Springer, New York-Berlin, 1982.


\bibitem {ZHL} B.Zhu, X.Huang and Y.Lian,
\textit{The systems with almost Banach mean equicontinuity for Abelian group actions.}
Acta Mathematica Scientia, 2022,42(3):919-940.


\end{thebibliography}
%    Insert the bibliography data here.

\end{document}